\newtheorem{theorem}{Theorem}[section]
\newtheorem{lemma}[theorem]{Lemma}
\theoremstyle{definition}
\newtheorem{definition}[theorem]{Definition}
\newtheorem{remark}[theorem]{Remark}
\theoremstyle{remark}
\newcommand{\be}{\begin{equation}}
\newcommand{\ee}{\end{equation}}
\numberwithin{equation}{section}
\begin{document}

\title{Vanishing theorems on compact Chern-K\"{a}hler-like Hermitian manifolds}

\author{Ping Li}
\address{School of Mathematical Sciences, Fudan University, Shanghai 200433, China}
\address{School of Mathematical Sciences, Tongji University, Shanghai 200092, China}

\email{pingli@tongji.edu.cn\\
pinglimath@gmail.com}
\thanks{The author was partially supported by the National
Natural Science Foundation of China (Grant No. 11722109) and the Fundamental Research Funds for the Central Universities.}

 \subjclass[2010]{53C55, 32Q10, 32Q05.}


\keywords{Chern-K\"{a}hler-like Hermitian manifolds, holomorphic sectional curvature, the first Chern class, holomorphic tensor fields, vanishing theorems}

\begin{abstract}
We show that, under the definiteness of holomorphic sectional curvature, the spaces of some holomorphic tensor fields on compact Chern-K\"{a}hler-like Hermitian manifolds are trivial. These can be viewed as counterparts to Bochner's classical vanishing theorems on compact K\"{a}hler manifolds under the definiteness of Ricci curvature or the existence of K\"{a}hler-Einstein metrics. Some arguments in our proof are inspired by and based on some ideas due to X.K. Yang and L. Ni-F.Y. Zheng.
\end{abstract}

\maketitle


\section{Introduction and main results}\label{introduction}
Throughout this article denote by $(M,\omega)$ a compact connected complex manifold of complex dimension $n\geq2$ endowed with a Hermitian metric whose associated positive $(1,1)$-form is $\omega$. By abuse of notation, $\omega$ itself is also called the Hermitian metric. Let $TM$ and $T^{\ast}M$ be the holomorphic tangent and cotangent bundles of $M$ respectively, and
$$\Gamma^p_q(M):=H^0\big(M,(TM)^{\otimes p}\otimes(T^{\ast}M)^{\otimes q}\big),\qquad (p, q\in\mathbb{Z}_{\geq 0})$$
the space of $(p,q)$-type \emph{holomorphic} tensor fields on $M$.

When $(M,\omega)$ is K\"{a}hler, Bochner noticed that (see \cite[\S 8]{YB}) definiteness properties of the Ricci curvature or existence of K\"{a}hler-Einstein metrics impose heavy restrictions on $\Gamma^p_q(M)$. The main idea of the proof is, for $T\in\Gamma^p_q(M)$, the eigenvalues of the Ricci curvature are involved in the difference $\Delta|T|^2-|\nabla T|^2$, where $\Delta(\cdot)$ is the Laplacian operator and $\nabla(\cdot)$ the Levi-Civita connection of $\omega$. This trick and its later various variants are called \emph{the Bochner techniques} and have far-reaching impacts in and beyond differential geometry (see \cite{Wu88}). Thanks to the later celebrated Calabi-Yau theorem and Aubin-Yau theorem (\cite{Yau77}), (Some of) Bochner's results can now be reformulated in terms of definiteness of the first Chern class $c_1(M)$ as follows (\cite{Ko}, \cite{Ko1}, \cite[p. 57]{KH}).
\begin{theorem}[Bochner, Calabi-Yau, Aubin-Yau]\label{Bochner}
Let $(M,\omega)$ be a compact K\"{a}hler manifold.
\begin{enumerate}
\item
If $c_1(M)$ is quasi-positive, then $\Gamma^p_q(M)=0$ when $q>>p$. In particular $\Gamma^0_q(M)=0$ when $q\geq 1$ and consequently the Hodge numbers $h^{q,0}(M)=0$ when $q\geq 1$.

\item
If $c_1(M)$ is quasi-negative, then $\Gamma^p_q(M)=0$ when $p>>q$. In particular $\Gamma^p_0(M)=0$ when $p\geq 1$.

\item
If $c_1(M)<0$, then $\Gamma^p_q(M)=0$ when $p>q$.

\end{enumerate}
\end{theorem}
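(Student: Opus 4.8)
The plan is to deduce each item from a definiteness property of the \emph{Ricci curvature}, obtained from the hypothesis on $c_1(M)$ through the Calabi--Yau and Aubin--Yau theorems, and then to apply the Bochner technique to holomorphic tensor fields. By Yau's solution of the Calabi conjecture, every smooth closed real $(1,1)$-form representing $c_1(M)$ is the Ricci form of some K\"{a}hler metric on $M$. Hence one may choose a K\"{a}hler metric $\omega$ with $\mathrm{Ric}(\omega)\geq 0$ and $\mathrm{Ric}(\omega)>0$ at one point when $c_1(M)$ is quasi-positive, one with $\mathrm{Ric}(\omega)\leq 0$ and $\mathrm{Ric}(\omega)<0$ at one point when $c_1(M)$ is quasi-negative, and, by the Aubin--Yau theorem, a K\"{a}hler--Einstein metric $\omega$ with $\mathrm{Ric}(\omega)=-\omega$ when $c_1(M)<0$. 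In the quasi-positive case I also record that $-K_M$ is then nef and big, so that $M$ is projective and rationally connected --- this will be needed below.

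Now fix $T\in\Gamma^p_q(M)$ and equip $E:=(TM)^{\otimes p}\otimes(T^{\ast}M)^{\otimes q}$ with the Hermitian metric and the Chern connection $D=D'+\bar\partial$ induced by $\omega$; since $\omega$ is K\"{a}hler this agrees with the Levi--Civita connection. Because $T$ is holomorphic we have $\bar\partial T=0$, so the Bochner--Kodaira--Weitzenb\"{o}ck identity applied to $\|T\|^2$, after taking the trace with $\omega$ and integrating over the compact manifold $M$, reduces to $\int_M\|D'T\|^2=\int_M\langle\Theta_\omega(E)\,T,T\rangle$, where $\Theta_\omega(E)$ is the $\omega$-trace of the Chern curvature of $E$. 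The K\"{a}hler symmetry of the curvature tensor identifies the $\omega$-trace of the Chern curvature of $TM$ with the Ricci endomorphism $\mathrm{Ric}$, and hence $\Theta_\omega(E)$ acts on $T$ as $\sum_{a=1}^{p}\mathrm{Ric}_{(a)}-\sum_{b=1}^{q}\mathrm{Ric}_{(b)}$, where $\mathrm{Ric}_{(a)}$ applies $\mathrm{Ric}$ to the $a$-th tensor slot. Diagonalizing $\mathrm{Ric}$ in a unitary coframe, each of $\langle\mathrm{Ric}_{(a)}T,T\rangle$ and $\langle\mathrm{Ric}_{(b)}T,T\rangle$ lies pointwise between $\lambda\|T\|^2$ and $\Lambda\|T\|^2$, where $\lambda\leq\Lambda$ are the least and greatest Ricci eigenvalues at the point in question.

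The conclusions now drop out according to the sign of $\langle\Theta_\omega(E)T,T\rangle$. For item (3), $\mathrm{Ric}=-\omega$ makes $\Theta_\omega(E)$ the scalar $q-p$, so $\int_M\|D'T\|^2=(q-p)\int_M\|T\|^2$; if $p>q$ the right-hand side is $\leq 0$ while the left is $\geq 0$, forcing $T\equiv 0$. For the endpoint $p=0,\ q\geq 1$ of (1), $\Theta_\omega(E)=-\sum_b\mathrm{Ric}_{(b)}$ is $\leq 0$ everywhere, so $D'T=0$; thus $T$ is parallel, $\|T\|$ is constant, and (the integrand being also identically zero) evaluating at a point where $\mathrm{Ric}>0$ gives $T\equiv 0$, that is $\Gamma^0_q(M)=0$ for $q\geq 1$. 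Since $H^0(M,\Omega^q_M)\subseteq\Gamma^0_q(M)$, this in particular gives $h^{q,0}(M)=0$ (alternatively by Kawamata--Viehweg vanishing on the weak Fano $M$). The endpoint $q=0,\ p\geq 1$ of (2) is symmetric, with $\mathrm{Ric}\leq 0$. For the remaining ranges of (1) and (2), if the chosen metric happens to be \emph{strictly} definite, say $0<\lambda_0\,\omega\leq\mathrm{Ric}\leq\Lambda_0\,\omega$, then pointwise $\langle\Theta_\omega(E)T,T\rangle\leq(p\Lambda_0-q\lambda_0)\|T\|^2$, which is negative wherever $T\neq 0$ as soon as $q>(\Lambda_0/\lambda_0)\,p$; since its integral equals $\int_M\|D'T\|^2\geq 0$, it must vanish identically, whence $T\equiv 0$. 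This is the meaning of ``$q\gg p$'' (and, dually, of ``$p\gg q$''): the threshold depends on $M$ through the Ricci-eigenvalue ratio, which is why it cannot in general be replaced by $q>p$.

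I expect the real difficulty to be the ranges ``$q\gg p$'' and ``$p\gg q$'' precisely when $c_1(M)$ is only \emph{quasi}-(positive or negative): Yau's theorem then yields merely a semi-definite Ricci form, the ratio $\Lambda_0/\lambda_0$ degenerates, and $\Theta_\omega(E)$ need not be sign-definite away from the zero locus of $T$. Here the pointwise curvature estimate must be traded for an algebro-geometric argument that exploits the rational connectedness recorded in the first step: restricting $T$ to a very free rational curve $C\cong\mathbb{P}^1$ through a general point of $M$, the bundle $E|_C$ splits as a direct sum of line bundles of degree at most $c_M\,p-q$ for a constant $c_M$ depending only on $M$, hence is a sum of negative line bundles once $q>c_M\,p$, which forces $T|_C\equiv 0$; letting such curves sweep out a dense open subset of $M$ then forces $T\equiv 0$. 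The quasi-negative case is handled symmetrically, using general complete-intersection curves of large degree, along which $\Omega^1_M$ is sufficiently positive and $TM$ sufficiently negative. By contrast, the Bochner computation of the previous two paragraphs is essentially formal once the curvature conventions are pinned down; the substantive work lies in this degenerate semi-definite regime.
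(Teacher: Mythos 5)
First, a point of reference: the paper does not prove Theorem \ref{Bochner} at all --- it is quoted as classical background with the proofs deferred to \cite{YB}, \cite{Ko}, \cite{Ko1}, \cite{KH} --- so your attempt can only be measured against those sources and against the paper's own analogous argument for Theorem \ref{quantitative version} (which runs the same Bochner computation pointwise at a maximum of $|T|$ via Lemma \ref{Bochner fromula0 lemma}, rather than integrating). Your core argument is exactly the classical route: use Calabi--Yau, resp.\ Aubin--Yau, to pick $\omega$ with $\mathrm{Ric}(\omega)$ quasi-positive, quasi-negative, or equal to $-\omega$, then feed the integrated Bochner identity $\int_M\|D'T\|^2=\int_M\langle\Theta_\omega(E)T,T\rangle$, with $\Theta_\omega(E)$ acting as $\sum_a\mathrm{Ric}_{(a)}-\sum_b\mathrm{Ric}_{(b)}$, into the sign hypotheses. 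Your treatment of part (3), of the endpoints $\Gamma^0_q$ and $\Gamma^p_0$ (parallelism plus evaluation at a point of strict definiteness), and of the ranges governed by the ratio of extreme Ricci eigenvalues in the strictly definite case is correct, and that eigenvalue-threshold formulation is precisely what the remark following Theorem \ref{Bochner} means by making ``$q\gg p$'' and ``$p\gg q$'' precise.

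The one place you go beyond the classical argument is your final paragraph, and there the quasi-negative half has a genuine gap. You are right that under mere quasi-positivity no K\"ahler metric has $\lambda_{\min}>0$, so for $p\geq 1$ the eigenvalue threshold degenerates; the cited sources do not claim more in that regime than the $p=0$ (resp.\ $q=0$) endpoints together with the eigenvalue-conditional statement, and they certainly do not invoke rational connectedness. Your patch for part (1) (weak Fano $\Rightarrow$ rationally connected, very free rational curve $C$ with $TM|_C$ ample of bounded splitting type, hence $E|_C$ a sum of negative line bundles once $q>c_Mp$) can be made rigorous, but it imports a substantial theorem and requires the bound $c_M$ to be uniform over a family of very free curves sweeping out $M$ --- worth stating explicitly. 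The symmetric patch for part (2) does not follow as written: bigness of $K_M$ only controls the total degree $\deg(TM|_C)=-K_M\cdot C\leq 0$ on a general complete-intersection curve, not the signs of the individual pieces of the Harder--Narasimhan filtration of $TM|_C$, so you cannot conclude that $(TM)^{\otimes p}\otimes(T^{\ast}M)^{\otimes q}|_C$ has no sections; $TM|_C$ may admit a sub-line-bundle of nonnegative degree even when its total degree is negative. This gap lives only in the extra strength you chose to claim, not in the theorem as the paper and its references intend it.
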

\begin{remark}
\begin{enumerate}
\item
Quasi-positivity (resp. quasi-negativity) of $c_1(M)$ means that there exists a closed $(1,1)$-form representing $c_1(M)$ which is nonnegative (resp. non-positive) everywhere and positive (resp. negative) somewhere.
\item
The qualitative conditions ``$q>>p$" and ``$p>>q$" can be made precise in terms of the maximum and minimum of the eigenvalues of the Ricci curvature. Details can be found in \cite[\S 8]{YB} and \cite{KH}.
\item
Another early usage of the Bochner technique to show holomorphic vanishing results is due to Kobayashi and H. Wu in \cite{KW}, where the curvature conditions are imposed on the Ricci curvature of the holomorphic vector bundles.
\end{enumerate}
\end{remark}

For K\"{a}hler manifolds the relationship between Ricci curvature (``Ric" for short) and holomorphic sectional curvature (``HSC" for short) is quite subtle and even mysterious. On one hand, both $\text{Ric}(\omega)$ and $\text{HSC}(\omega)$ dominate and are dominated by scalar curvature and holomorphic bisectional curvature respectively. On the other hand, in general $\text{Ric}(\omega)$ and $\text{HSC}(\omega)$ do not dominate each other. Nevertheless, a recent breakthrough due to Wu-Yau, Tosatti-Yang and Diverio-Trapani (\cite{WY16-1}, \cite{TY17}, \cite{DT19}, \cite{WY16-2}) tells us that the quasi-negativity of HSC implies $c_1<0$. As a consequence, the conclusions of Parts $(2)$ and $(3)$ in Theorem \ref{Bochner} remain true under the assumption of $HSC(\omega)$ being quasi-negative. In view of this, one may wonder whether similar statement holds true for compact K\"{a}hler manifolds with positive HSC. However, it turns out that (\cite{Hi75}, \cite[p. 949]{Yang16}) the Hirzebruch surfaces $$M=\mathbb{P}\big(\mathcal{O}_{\mathbb{P}^1}(-k)
\oplus\mathcal{O}_{\mathbb{P}^1}\big) \qquad(k\geq 2) $$
admit K\"{a}hler metrics with positive HSC but $c_1(M)$ are \emph{not} positive.

Even if in general the positivity of HSC cannot imply that of the first Chern class, we may still ask if the conclusions of Part $(1)$ in Theorem \ref{Bochner} hold true under the assumption of HSC being positive. In his recent work \cite{Yang18}, X.K. Yang proved a conjecture of S.-T. Yau (\cite[Problem 47]{Yau82}), which states that a compact K\"{a}hler manifold $(M,\omega)$ with $\text{HSC}(\omega)>0$ must be projective  and thus gives a metric criterion of the projectivity. Indeed Yang showed that the condition of $\text{HSC}(\omega)>0$ leads to the Hodge numbers $h^{q,0}(M)=0$ for $1\leq q\leq n$ via the notion of \emph{RC-positivity} introduced by himself (\cite[Thm 1.7]{Yang18}). In particular $h^{2,0}(M)=0$ implies the projectivity of $M$ due to a well-known result of Kodaira (\cite[p. 143]{MK71}). Note that $\Gamma^0_q(M)=0$ yields $h^{q,0}(M)=0$ and hence the vanishing results Yang
obtained provide evidence towards the expected conclusions of Part $(1)$ in Theorem \ref{Bochner}.

In another recent work \cite{NZ}, L. Ni and F.Y. Zheng proved that the projectivity remains true under a weaker positivity: the $2$nd scalar curvature $S_2(\omega)$. they indeed introduced in \cite{NZ} a family of $S_k(\omega)$ ($1\leq k\leq n$), called $k$-th sclar curvatures, by averaging $\text{HSC}(\omega)$ over $k$-dimensional subpaces of the tangent spaces, and the positivity of $\text{HSC}(\omega)$ implies that of $S_2(\omega)$, and showed that (\cite[Thm 1.1]{NZ}) the Hodge numbers $h^{q,0}(M)=0$ for $2\leq q\leq n$ whenever $S_2(\omega)>0$.

\emph{The purpose} of this note is, by building on some ideas in \cite{Yang18} and \cite{NZ}, to show that the conclusions of Part $(1)$ in Theorem \ref{Bochner} still hold true under the assumption of HSC being positive. In fact, we will show them in a more general setting. For this purpose, let us recall the following notion, which was introduced and investigated by B. Yang and F. Zheng in \cite{YZ}.

\begin{definition}
Let $(M,\omega)$ be a Hermitian manifold and $R$ the curvature tensor of the \emph{Chern connection}, which is the unique canonical connection compatible with both the metric and the complex structure. The Hermitian metric $\omega$ is called \emph{Chern-K\"{a}hler-like} (CKL for short) if \be\label{CKL}
R(X,\overline{Y},Z,\overline{W})=R(Z,\overline{Y},X,\overline{W})\ee for any $(1,0)$-type tangent vectors $X$, $Y$, $Z$ and $W$.
\end{definition}
\begin{remark}\label{remark}
When $\omega$ is K\"{a}hler, $R$ is the usual curvature tensor, which clearly satisfies (\ref{CKL}).
Note that, by taking complex conjugations, $(\ref{CKL})$ implies that
$$R(X,\overline{Y},Z,\overline{W})=
\overline{R(Y,\overline{X},W,\overline{Z})}=
\overline{R(W,\overline{X},Y,\overline{Z})}=
R(X,\overline{W},Z,\overline{Y})
.$$
Therefore the condition (\ref{CKL}) ensures that $R$ obeys \emph{all} the symmetries satisfied by the curvature tensor of a K\"{a}hler metric and thus the term CKL is justified.
\end{remark}
As pointed out in \cite[p. 1197]{YZ}, there are plenty of non-K\"{a}hler Hermitian metrics which are CKL. On the other hand, it turns out that (\cite[Thm 3]{YZ}) a CKL Hermitian metric $\omega$ must be balanced, i.e., $d\omega^{n-1}=0$. Hence CKL Hermitian metrics interpolate between K\"{a}hler and balanced metrics.

For a Hermitian manifold $(M,\omega)$, $x\in M$ and $v\in T_xM-\{0\}$,
the \emph{holomorphic sectional curvature} of $\omega$, denoted by $\text{HSC}(\omega)$, at the point $x$ and the direction $v$ is defined by $$H_x(v):=\frac{R(v,\bar{v},v,\bar{v})}{|v|^4}.$$ $\text{HSC}(\omega)$ is called positive, denoted by $\text{HSC}(\omega)>0$, if
$H_x(v)>0$ for any $x\in M$ and $v\in T_xM-\{0\}$. Note that this $H_x(\cdot)$ is indeed defined on $\mathbb{P}(T_xM)$, the projectivation of $T_xM$. Thus the maximal and minimal values of $H_x$ can be attained, a fact which will be used later.

With these notions in mind, our main result in this note is the following theorem, which can be viewed as counterparts to Theorem \ref{Bochner}.
\begin{theorem}\label{CKL result}
Let $(M,\omega)$ be a compact CKL Hermitian manifold.
\begin{enumerate}
\item
If $\text{HSC}(\omega)>0$, then $\Gamma^p_q(M)=0$ when $q>>p$. In particular $\Gamma^0_q(M)=0$ when $q\geq 1$ and consequently the Hodge numbers $h^{q,0}(M)=0$ when $1\leq q\leq n$.

\item
If $\text{HSC}(\omega)<0$, then $\Gamma^p_q(M)=0$ when $p>>q$. In particular $\Gamma^p_0(M)=0$ when $p\geq 1$.
\end{enumerate}
\end{theorem}

\begin{remark}
\begin{enumerate}
\item
As remarked above, when the metric $\omega$ is K\"{a}hler, Part $(2)$ in Theorem \ref{CKL result} follows from Theorem \ref{Bochner} and the recent result due to Wu-Yau et. al. However, if the CKL metric $\omega$ is \emph{non}-K\"{a}hler, the statements in Part $(2)$ are also new.

\item
The qualitative conditions ``$q>>p$" and ``$p>>q$" in Theorem \ref{CKL result} can be made more quantitative. Details can be found in Theorem \ref{quantitative version} and its proof.
\end{enumerate}
\end{remark}

\section{Preliminaries}\label{preliminaries}
We briefly collect in this section some basic facts on Hermitian holomorphic vector bundles and Hermitian manifolds in the form we shall use to prove Theorem \ref{CKL result}. A thorough treatment can be found in \cite{Ko2}.

Let $(E^r,h)\rightarrow M$ be a Hermitian holomorphic vector bundle of rank $r$ on an $n$-dimensional compact complex manifold $M$
endowed with the canonical Chern connection $\nabla$ and the curvature tensor $$R:=\nabla^2\in\Gamma(\Lambda^{1,1}M\otimes E^{\ast}\otimes E).$$
Here and throughout this section we use $\Gamma(\cdot)$ to denote the space of \emph{smooth} sections for vector bundles and the notation $\Gamma^p_q(M)$ is reserved to denote that of the \emph{holomorphic} $(p,q)$-tensor fields on $M$ as before.

Under a local frame field $\{s_1,\ldots,s_r\}$ of $E$, whose dual coframe field is denoted by $\{s_1^{\ast},\ldots,s_r^{\ast}\}$, and local coordinates $\{z^1,\ldots,z^n\}$ on $M$, the curvature tensor $R$ and the Hermitian metric $h$ can be written as
\begin{eqnarray}\label{curvature tensor}
\left\{ \begin{array}{ll}
R=:\Omega^{\beta}_{\alpha}s^{\ast}_{\alpha}\otimes s_{\beta}=:R^{\beta}_{i\bar{j}\alpha}dz^i\wedge d\bar{z}^j\otimes s^{\ast}_{\alpha}\otimes s_{\beta},\\
~\\
h=(h_{\alpha\bar{\beta}}):=\big(h(s_{\alpha},s_{\beta})\big),\\
~\\
R_{ij\alpha\bar{\beta}}:=R_{ij\alpha}^{\gamma}h_{\gamma\bar{\beta}}.
\end{array} \right.
\end{eqnarray}
Here and in what follows we always adopt the Einstein summation convention. For simplicity we sometimes use $<\cdot,\cdot>$ to denote the Hermitian metric $h(\cdot,\cdot)$ and the induced metrics on various vector bundles arising naturally from $E$.

Recall that for $\eta=\eta^{\alpha}s_{\alpha}\in\Gamma(E)$,
$$R(\eta)=(\Omega^{\beta}_{\alpha}s^{\ast}_{\alpha}\otimes s_{\beta})(\eta^{\gamma}s_{\gamma})=\Omega^{\beta}_{\alpha}\eta^{\alpha}s_{\beta},$$
and $u\in u^i\frac{\partial}{\partial z^i}$, $v\in v^i\frac{\partial}{\partial z^i}$,
$$R_{u\bar{v}}(\eta)=\Omega^{\beta}_{\alpha}
(u,\bar{v})\eta^{\alpha}s_{\beta}=R^{\beta}_{i\bar{j}\alpha}u^i\bar{v^j}\eta^{\alpha}s_{\beta},$$
and thus
\be\label{Hermitian formula}
\begin{split}
<R_{u\bar{v}}(\eta),\xi>=&
<R^{\beta}_{i\bar{j}\alpha}u^i\bar{v^j}\eta^{\alpha}s_{\beta},\xi^{\gamma}s_{\gamma}>\\
=&R^{\beta}_{i\bar{j}\alpha}u^i\bar{v^j}\eta^{\alpha}\bar{\xi^{\gamma}}h_{\beta\bar{\gamma}}\\
=&R_{i\bar{j}\alpha\bar{\beta}}u^i\bar{v^j}\eta^{\alpha}\bar{\xi^{\beta}}.\end{split}
\ee

A direct consequence of (\ref{Hermitian formula}) is that $R_{u\bar{u}}(\cdot)$ is a \emph{Hermitian transformation}: \be\label{Hermitan transformation}<R_{u\bar{u}}(\eta),\xi>=<\eta,R_{u\bar{u}}(\xi)>,\qquad\text{for all $\eta,\xi\in\Gamma(E).$}\ee

The following two lemmas are crucial to our proof.
\begin{lemma}\label{Bochner fromula0 lemma}
Let $\eta\in\Gamma(E)$ be holomorphic and the maximum of $|\eta|:=<\eta,\eta>^{\frac12}$ is attained at $x\in M$. Then
\be\label{Bochner formula0}<R_{u\bar{u}}(\eta),\eta>\big|_x\geq0,\qquad\text{for all $u\in T_xM.$}\ee
\end{lemma}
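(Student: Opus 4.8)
The plan is to use the standard maximum-principle argument for holomorphic sections of Hermitian holomorphic vector bundles. First I would observe that since $\eta$ is holomorphic, the function $f:=|\eta|^2=\langle\eta,\eta\rangle$ is smooth and nonnegative on $M$, and I want to compute how its second derivatives interact with the curvature at a point $x$ where $f$ attains its maximum. The key computation is the Bochner-type identity for $\partial\bar\partial|\eta|^2$: using the compatibility of the Chern connection with the metric and the fact that the $(0,1)$-part of $\nabla\eta$ vanishes (holomorphicity), one gets
\be
\partial_i\partial_{\bar j}\langle\eta,\eta\rangle
=\langle\nabla_i\eta,\nabla_j\eta\rangle
-\langle R_{i\bar j}(\eta),\eta\rangle
\ee
in local coordinates, where $\nabla_i\eta$ denotes the $(1,0)$-covariant derivative. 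The sign on the curvature term comes from differentiating $\langle\nabla_i\eta,\eta\rangle$ in the $\bar z^j$ direction and recognizing $\nabla_{\bar j}\nabla_i\eta-\nabla_i\nabla_{\bar j}\eta = -R_{i\bar j}\eta$ together with $\nabla_{\bar j}\eta=0$.

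Next I would contract this identity in the direction $u$: for $u=u^i\partial/\partial z^i\in T_xM$, set $g(\cdot)$ to be the restriction of $f$ to a holomorphic disc through $x$ tangent to $u$, or more simply evaluate $u^i\bar u^j\partial_i\partial_{\bar j}f$ at $x$. Since $x$ is an interior maximum of $f$ (here I use compactness of $M$ to guarantee the maximum is attained, and that it is therefore a critical point with negative semidefinite complex Hessian), the complex Hessian $\big(\partial_i\partial_{\bar j}f\big)\big|_x$ is negative semidefinite, so $u^i\bar u^j\partial_i\partial_{\bar j}f\big|_x\le 0$. Plugging into the Bochner identity gives
\be
0\ge u^i\bar u^j\partial_i\partial_{\bar j}f\big|_x
=|\nabla_u\eta|^2\big|_x-\langle R_{u\bar u}(\eta),\eta\rangle\big|_x
\ge -\langle R_{u\bar u}(\eta),\eta\rangle\big|_x,
\ee
which rearranges to $\langle R_{u\bar u}(\eta),\eta\rangle\big|_x\ge |\nabla_u\eta|^2\big|_x\ge 0$, proving \eqref{Bochner formula0}. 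Note this even yields the slightly stronger inequality with $|\nabla_u\eta|^2$ on the right, which may be what is actually needed downstream.

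The only genuinely delicate point is getting the sign and the precise form of the Bochner identity right for the \emph{Chern} connection on a general Hermitian holomorphic bundle (not the Levi-Civita connection, and without assuming the base metric is Kähler): one must be careful that the relevant curvature is $R=\nabla^2$ as defined in \eqref{curvature tensor}, that $\nabla^{0,1}=\bar\partial$ on holomorphic sections so the $\bar\partial$-part of $\nabla\eta$ vanishes, and that the metric-compatibility $\partial\langle s,t\rangle=\langle\nabla s,t\rangle+\langle s,\nabla^{0,1}t\rangle$ (with a conjugation convention) is applied consistently. Everything else—attainment of the maximum, negativity of the Hessian at a max, contraction against $u$—is routine. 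So the main obstacle is bookkeeping with conventions rather than any conceptual difficulty, and I would handle it by fixing the convention $\langle\eta,\xi\rangle=h_{\alpha\bar\beta}\eta^\alpha\overline{\xi^\beta}$ as in \eqref{Hermitian formula} and differentiating carefully once.
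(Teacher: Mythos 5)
Your proposal is correct and follows essentially the same route as the paper: the $\partial\bar\partial$-Bochner identity $\partial\bar\partial|\eta|^2=\langle\nabla\eta,\nabla\eta\rangle-\langle R(\eta),\eta\rangle$ (which the paper simply cites from Kobayashi rather than rederiving), evaluated against $u\wedge\bar u$ at the maximum point of $|\eta|$, where the complex Hessian is negative semidefinite. The stronger conclusion $\langle R_{u\bar u}(\eta),\eta\rangle|_x\geq|\nabla_u\eta|^2$ that you note is also implicit in the paper's displayed formula.
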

\begin{remark}
In terms of local coordinates, (\ref{Bochner formula0}) is equivalent to the fact that $$\big(R_{i\bar{j}\alpha\bar{\beta}}\eta^{\alpha}\bar{\eta^{\beta}}\big)\big|_x\geq0$$
as a Hermitian matrix.
\end{remark}
\begin{proof}
A well-known Bochner-type formula reads (\cite[p.50]{Ko2})
\be\label{Bochner formula}\partial\bar{\partial}|\eta|^2=<\nabla\eta,\nabla\eta>-<R(\eta),\eta>,\ee
where $<R(\eta),\eta>$ is understood to pair the elements in $\Gamma(E)$ and maintain those in $\Gamma(\Lambda^{1,1}M)$. The holomorphicity of $\eta$ implies that $\nabla\eta\in\Gamma(\Lambda^{1,0}M\otimes E)$ and hence $$<\nabla\eta,\nabla\eta>\in\Gamma(\Lambda^{1,1}M)$$
is similarly understood.
The reader is referred to \cite[p.50]{Ko2} for (\ref{Bochner formula}) in terms of local coordinates.

For any vector $u\in T_xM$ we apply $u\wedge\bar{u}$ to evaluate both sides of (\ref{Bochner formula}) to yield
\be\label{Bochner formula2}<R_{u\bar{u}}(\eta),\eta>\big|_x=
\big(-\sqrt{-1}\partial\bar{\partial}|\eta|^2\big)\Big|_x
\big(\frac{1}{\sqrt{-1}}u\wedge\bar{u}\big)+|\nabla_u\eta|^2.\nonumber\ee
Since the maximum of $|\eta|$ is attained at $x$, the maximum principle implies that $$\big(-\sqrt{-1}\partial\bar{\partial}|\eta|^2\big)\Big|_x\geq0$$ as a $(1,1)$-form, and hence we arrive at the desired result.
\end{proof}
\begin{remark}
The above proof of using $\partial\bar{\partial}$-Bochner formula and applying the maximum principle to part of directions are indeed inspired by some arguments in \cite[Prop. 4.2]{Yang18} and \cite[Lemma 2.1]{NZ}. Similar techniques and ideas can also be found in \cite{An}, \cite{AC}, \cite{Liu} and \cite{Ni}.
\end{remark}

The following lemma is parallel to \cite[Lemma 6.1]{Yang18}, where the conclusion was stated for K\"{a}hler manifolds.
\begin{lemma}\label{inequality}
Let $(M,\omega)$ be a compact CKL Hermitian manifold, $x\in M$, and the unit vector $u\in T_xM$ minimizes (resp. maximizes) the holomorphic sectional curvature at $x$. Then
\be\label{formula inequality} R(u,\bar{u},v,\bar{v})\underset{(\leq)}{\geq}\frac{1+|<u,v>|^2}{2}H_x(u),\qquad\text{for all unit vectors $v\in T_xM$}.\ee
\end{lemma}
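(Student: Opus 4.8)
The plan is to imitate Berger's classical computation (the one underlying the Berger inequality for Kähler manifolds), which produces a relation between $R(u,\bar u,v,\bar v)$ and the values of $H_x$ at suitable unit vectors built out of $u$ and $v$. The crucial point is that, by Remark \ref{remark}, the CKL condition guarantees that $R$ at the point $x$ enjoys \emph{all} the K\"{a}hler curvature symmetries, so Berger's polarization identity goes through verbatim at $x$; the non-K\"{a}hlerness of $\omega$ plays no role in the purely algebraic manipulation. First I would reduce to the case where $v$ is orthogonal to $u$ by writing $v = a u + b w$ with $\langle u,w\rangle = 0$, $|w|=1$, $|a|^2+|b|^2=1$, and expanding $R(u,\bar u, v,\bar v)$ multilinearly; the terms involving an odd number of $u$'s among the last two slots can be grouped using the symmetries, and one is left with a combination of $R(u,\bar u,u,\bar u)=H_x(u)$, $R(u,\bar u,w,\bar w)$, and cross terms $R(u,\bar u,u,\bar w)$.

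Next I would run Berger's trick to estimate the orthogonal piece $R(u,\bar u, w, \bar w)$. For a real parameter $t$ and a phase $\lambda$ with $|\lambda|=1$, consider the unit vectors $u_\pm$ proportional to $u \pm t\lambda w$ and expand $H_x(u_\pm)\,|u\pm t\lambda w|^4 = R(u\pm t\lambda w, \overline{u\pm t\lambda w}, u\pm t\lambda w, \overline{u \pm t\lambda w})$. Averaging over $\lambda \in \{1,-1,\sqrt{-1},-\sqrt{-1}\}$ (or equivalently over the circle) kills the terms that are not invariant under $w \mapsto \lambda w$ and isolates a clean linear combination of $H_x(u)$ and $R(u,\bar u,w,\bar w)$, with coefficients that are explicit polynomials in $t$. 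Now I use the extremality hypothesis: since $u$ minimizes $H_x$ on $\mathbb{P}(T_xM)$, we have $H_x(u_\pm) \geq H_x(u)$ for all $t$; substituting $H_x(u_\pm)\geq H_x(u)$, clearing the positive normalization factors $|u\pm t\lambda w|^4$, and optimizing over $t$ (typically taking the limit $t\to 0$ after dividing by $t^2$, which is where second-order information at a minimum enters) yields the inequality $R(u,\bar u, w,\bar w) \geq \tfrac12 H_x(u)$ in the orthonormal case. For the maximizing case all inequalities reverse, giving the parenthetical statement.

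Finally I would reassemble: substituting back $v = au + bw$, using $R(u,\bar u,u,\bar u)=H_x(u)$ and $R(u,\bar u,w,\bar w)\geq \tfrac12 H_x(u)$ together with $|a|^2 = |\langle u,v\rangle|^2$ and $|b|^2 = 1-|\langle u,v\rangle|^2$, and checking that the genuinely mixed terms $R(u,\bar u,u,\bar w)$ either vanish after the averaging step or can be absorbed, one obtains
\[
R(u,\bar u,v,\bar v) \geq |a|^2 H_x(u) + |b|^2 \cdot \tfrac12 H_x(u) = \frac{1+|\langle u,v\rangle|^2}{2}\,H_x(u),
\]
as claimed. The main obstacle I anticipate is bookkeeping the cross terms cleanly: one must be careful that the first-order-in-$t$ contributions and the terms of type $R(u,\bar u,u,\bar w)$ really do drop out of the symmetrized sum (this is exactly where one invokes the full set of K\"{a}hler-type symmetries granted by the CKL hypothesis and Remark \ref{remark}), and that the sign of $H_x(u)$ is handled correctly when dividing through — here the fact that $H_x$ attains its extremum on the compact space $\mathbb{P}(T_xM)$, noted just before the statement, is what legitimizes the second-derivative-test step. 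Everything else is the standard Berger computation.
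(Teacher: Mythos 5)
Your proposal is correct and is essentially the paper's own argument: the paper likewise writes $v=au+bw$ with $\langle u,w\rangle=0$ and applies the first- and second-derivative tests at $\theta=0$ to $\theta\mapsto H_x\big((\cos\theta)u+(\sin\theta)\lambda w\big)$ for the two phases $\lambda=1$ and $\lambda=\sqrt{-1}$, which is exactly your phase-averaged Berger variation. The first-order conditions force the cross terms $R(u,\bar u,u,\bar w)$ and $R(u,\bar u,w,\bar u)$ to vanish, and summing the two second-order conditions cancels the $R(u,\bar w,u,\bar w)$-type terms and yields $2R(u,\bar u,w,\bar w)\underset{(\leq)}{\geq}R(u,\bar u,u,\bar u)$, after which the reassembly is as you describe.
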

\begin{remark}
As pointed out in \cite[p.198]{Yang18}, the tricks and various variants used in the proof of Lemma \ref{inequality} can be found in \cite[p.312]{Go98}, \cite[p.136]{Br}, \cite[Lemma 1.4]{BYT} and \cite[Lemma 4.1]{Yang17}. We remark that similar tricks can also be found in \cite{BG} and \cite[\S2]{Gr}, to the author's best knowledge. Although \cite[Lemma 6.1]{Yang18} is stated for K\"{a}hler manifolds, we will see in the course of the proof below that what it really needs is various K\"{a}hler-type symmetries of the curvature tensor $R$, which are satisfied by CKL Hermitian metrics as explained in Remark \ref{remark}. For the sake of the reader's convenience as well as completeness, we still include a proof below.
\end{remark}
\begin{proof}
Assume that a unit vector $w\in T_xM$ is such that $<u,w>=0$. Note that under this assumption we have
$$\big|(\cos\theta)u+(\sin\theta)w\big|=\big|(\cos\theta)u+(\sqrt{-1}\sin\theta) w\big|=1,\qquad\text{for all $\theta\in\mathbb{R}$.}$$
We consider
\be\begin{split}
f(\theta):=&H_x\big((\cos\theta) u+(\sin\theta) w\big)\\
=&R\big((\cos\theta)u+(\sin\theta) w,\ldots,\overline{(\cos\theta) u+(\sin\theta) w}\big),\end{split}\nonumber\ee
and
\be\begin{split}
g(\theta):=&H_x\big((\cos\theta) u+(\sqrt{-1}\sin\theta) w\big)\\
=&R\big((\cos\theta) u+(\sqrt{-1}\sin\theta) w,\ldots,\overline{(\cos\theta) u+(\sqrt{-1}\sin\theta) w}\big).
\end{split}\nonumber\ee

Since $u$ minimizes (resp. maximizes) $H_x(\cdot)$ and $f(0)=g(0)=H_x(u)$, $\theta=0$ attains the minimal (resp. maximal) value of $f(\theta)$ and $g(\theta)$. This means that
\be\label{1}f'(0)=g'(0)=0\ee
and
\be\label{1.5}
\text{$f''(0)\underset{(\leq)}{\geq}0$,~ $g''(0)\underset{(\leq)}{\geq}0$}.\ee

Denote by $R_{1\bar11\bar1}:=R(u,\bar{u},u,\bar{u})$, $R_{1\bar12\bar1}:=R(u,\bar{u},w,\bar{u})$ and so on. Direct calculations, together with various K\"{a}hler-like symmetries of $R_{i\bar{j}k\bar{l}}$ ensured by the CKL condition, show that
\be\label{2}f'(0)=R_{1\bar11\bar2}+R_{1\bar12\bar1},\qquad g'(0)=-R_{1\bar11\bar2}+R_{1\bar12\bar1}\ee
and
\begin{eqnarray}\label{3}
\left\{ \begin{array}{ll}
f''(0)=4R_{1\bar12\bar2}+R_{1\bar21\bar2}+R_{2\bar12\bar1}-2R_{1\bar11\bar1}\\
~\\
g''(0)=4R_{1\bar12\bar2}-R_{1\bar21\bar2}-R_{2\bar12\bar1}-2R_{1\bar11\bar1}.
\end{array} \right.
\end{eqnarray}
Combining (\ref{1}) with (\ref{2}) implies that
\be\label{4}R_{1\bar11\bar2}=R_{1\bar12\bar1}=0,\ee
and (\ref{1.5}) with (\ref{3}) leads to
\be\label{5}2R_{1\bar12\bar2}\underset{(\leq)}{\geq} R_{1\bar11\bar1}.\ee

Now for any unit vector $v\in T_xM$, we can choose a unit vector $w\in T_xM$ such that $<u,w>=0$ and $v=au+bw$ with $|a|^2+|b|^2=1$. Then
\be\begin{split}
R(u,\bar{u},v,\bar{v})&=R(u,\bar{u},au+bw,\overline{au+bw})\\
&=|a|^2R_{1\bar11\bar1}+|b|^2R_{1\bar12\bar2}
\qquad\big(\text{by (\ref{4})}\big)\\
&\underset{(\leq)}{\geq}(|a|^2+\frac{|b|^2}{2})R_{1\bar11\bar1}
\qquad\big(\text{by (\ref{5})}\big)\\
&=\frac{1+|<u,v>|^2}{2}H_x(u),
\end{split}\nonumber\ee
which yields the desired inequality (\ref{formula inequality}).
\end{proof}

\section{Proof of Theorem \ref{CKL result}}\label{proof}
We can now proceed to prove the main result in this note, Theorem \ref{CKL result}.

Assume temporarily that $(M^n,\omega)$ is a general compact Hermitian manifold and $T\in\Gamma^p_q(M)$ a $(p,q)$-type holomorphic tensor field on it. Let $x\in M$ and \emph{unit} vector $u\in T_xM$. Let $\{e_1,\ldots,e_n\}$ be an orthonormal basis of $T_xM$ and $\{\theta^1,\ldots,\theta^n\}$ an orthonormal basis of $T_x^{\ast}M$ dual to $\{e_i\}$. Denote by
$$T\big|_x=T^{\alpha_1\cdots\alpha_p}_{\beta_1\cdots\beta_q}e_{\alpha_1}
\otimes\cdots\otimes e_{\alpha_p}\otimes\theta^{\beta_1}\otimes\cdots\otimes\theta^{\beta_q}.$$

Recall from (\ref{Hermitan transformation}) that $R_{u\bar{u}}(\cdot)$ is a Hermitian transformation and hence its eigenvalues are all real, say $\lambda_i=\lambda_i(x,u)$ ($1\leq i\leq n$). Without loss of generality we may choose the above orthonormal basis $\{e_1,\ldots,e_n\}$ such that
\be\label{eigenvalue}R_{u\bar{u}}(e_i)=\lambda_ie_i,\qquad\lambda_i\in\mathbb{R},\qquad 1\leq i\leq n.\ee
Note that the induced actions of $R_{u\bar{u}}$ on $\theta^1,\ldots,\theta^n$ are given by
$$R_{u\bar{u}}(\theta^i)=-\lambda_i\theta^i,\qquad 1\leq i\leq n,$$
and hence
$$R_{u\bar{u}}(T)\big|_x=
\sum_{\overset{\alpha_1,\ldots,\alpha_p}{\beta_1,\ldots,\beta_q}}
\Big[\big(\sum_{i=1}^p\lambda_{\alpha_i}-\sum_{j=1}^q\lambda_{\beta_j}\big)
T^{\alpha_1\cdots\alpha_p}_{\beta_1\cdots\beta_q}e_{\alpha_1}
\otimes\cdots\otimes e_{\alpha_p}\otimes\theta^{\beta_1}\otimes\cdots\otimes\theta^{\beta_q}\Big].$$
This yields that
\be\label{T-formula}<R_{u\bar{u}}(T)\big|_x,T\big|_x>=\sum_{\overset{\alpha_1,\ldots,\alpha_p}{\beta_1,\ldots,\beta_q}}
\big(\sum_{i=1}^p\lambda_{\alpha_i}-\sum_{j=1}^q\lambda_{\beta_j}\big)
\big|T^{\alpha_1\cdots\alpha_p}_{\beta_1\cdots\beta_q}\big|^2.\ee

Let
\be\label{pointmaxmin}\lambda_{\max}(x,u):=\max\big\{\lambda_i(x,u)\big\},\qquad
\lambda_{\min}(x,u):=\min\big\{\lambda_i(x,u)\big\}
\ee
and
\be\label{maxmin}\lambda_{\max}:=\max_{\overset{x\in M, u\in T_xM}{ |u|=1}}\lambda_{\max}(x,u),\qquad
\lambda_{\min}:=\min_{\overset{x\in M, u\in T_xM}{ |u|=1}}\lambda_{\min}(x,u)
.\ee
We remark that $\lambda_{\max}(x,u)$ and $\lambda_{\min}(x,u)$ are in general \emph{continuous} functions and may not be smooth. Nevertheless, continuity is enough to guarantee that $\lambda_{\max}$ and $\lambda_{\min}$ can both be \emph{attained} as the maximum and minimum in (\ref{maxmin}) are over the unit sphere bundle of $TM$, which is compact.

Similarly define
\be\label{maxminholo}H_{\max}:=\max_{\overset{x\in M, u\in T_xM}{ |u|=1}}H_x(u),\qquad
H_{\min}:=\min_{\overset{x\in M, u\in T_xM}{ |u|=1}}H_x(u).
\ee
For the same reason as above $H_{\max}$ and $H_{\min}$ can also be attained as they are smooth. Thus the positivity (resp. negativity) of $\text{HSC}(\omega)$ implies that of $H_{\min}$ (resp. $H_{\max}$).

Now we are ready to prove Theorem \ref{CKL result} in a more quantitative version, that is
\begin{theorem}\label{quantitative version}
Let $(M,\omega)$ be a compact CKL Hermitian manifold.
\begin{enumerate}
\item
If $\text{HSC}(\omega)>0$, then $\Gamma^p_q(M)=0$ whenever $2p\lambda_{\max}<qH_{\min}$. In particular $\Gamma^0_q(M)=0$ whenever $q\geq 1$ and consequently the Hodge numbers $h^{q,0}(M)=0$ for $1\leq q\leq n$.

\item
If $\text{HSC}(\omega)<0$, then $\Gamma^p_q(M)=0$ whenever $pH_{\max}<2q\lambda_{\min}$. In particular $\Gamma^p_0(M)=0$ whenever $p\geq 1$.
\end{enumerate}
\end{theorem}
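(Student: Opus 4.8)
The plan is to run a maximum-principle argument on $|T|^2$, combining Lemma \ref{Bochner fromula0 lemma}, the pointwise identity (\ref{T-formula}), and Lemma \ref{inequality}. Fix $T\in\Gamma^p_q(M)$ and suppose $T\not\equiv0$; by compactness of $M$ the function $|T|^2$ attains its maximum at some $x_0\in M$, and $|T(x_0)|^2>0$. Regard $T$ as a holomorphic section of the Hermitian holomorphic vector bundle $E:=(TM)^{\otimes p}\otimes(T^{\ast}M)^{\otimes q}$ with its induced Chern connection. Lemma \ref{Bochner fromula0 lemma} then gives $<R_{u\bar{u}}(T|_{x_0}),T|_{x_0}>\geq0$ for \emph{every} $u\in T_{x_0}M$. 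The point is to choose $u$ so that, via (\ref{T-formula}), the left-hand side is forced to be \emph{strictly negative} unless $T|_{x_0}=0$ --- a contradiction which proves $T\equiv0$. Note that $x_0$ is pinned down by $|T|^2$ while the direction $u$ remains free, so there is no circularity.

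For Part (1) I take $u$ to be a unit vector \emph{minimizing} $H_{x_0}(\cdot)$ on $\mathbb{P}(T_{x_0}M)$ (which exists by continuity and compactness), and let $\{e_i\}$ be an orthonormal eigenbasis of $R_{u\bar{u}}$ on $T_{x_0}M$ as in (\ref{eigenvalue}), so that $\lambda_i=<R_{u\bar{u}}(e_i),e_i>=R(u,\bar{u},e_i,\bar{e}_i)$. On one hand $\lambda_i\leq\lambda_{\max}$ for each $i$ by (\ref{maxmin}); on the other hand, applying Lemma \ref{inequality} with this (minimizing) $u$ and $v=e_i$ gives $\lambda_i\geq\frac{1+|<u,e_i>|^2}{2}H_{x_0}(u)\geq\frac12 H_{x_0}(u)\geq\frac12 H_{\min}>0$. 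Plugging these into (\ref{T-formula}), every coefficient $\sum_{i=1}^p\lambda_{\alpha_i}-\sum_{j=1}^q\lambda_{\beta_j}$ is $\leq p\lambda_{\max}-\frac q2 H_{\min}$, which is $<0$ precisely under the hypothesis $2p\lambda_{\max}<qH_{\min}$. Hence $0\leq<R_{u\bar{u}}(T|_{x_0}),T|_{x_0}>\leq\big(p\lambda_{\max}-\frac q2 H_{\min}\big)|T(x_0)|^2<0$, the desired contradiction. Taking $p=0$ the hypothesis becomes $qH_{\min}>0$, valid for all $q\geq1$, so $\Gamma^0_q(M)=0$; since a holomorphic $q$-form is a holomorphic section of the subbundle $\Lambda^q T^{\ast}M\subset(T^{\ast}M)^{\otimes q}$, this yields $h^{q,0}(M)=0$ for $1\leq q\leq n$.

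Part (2) is the mirror image: now take $u$ \emph{maximizing} $H_{x_0}(\cdot)$, so Lemma \ref{inequality} (maximizing case) gives $\lambda_i=R(u,\bar{u},e_i,\bar{e}_i)\leq\frac{1+|<u,e_i>|^2}{2}H_{x_0}(u)\leq\frac12 H_{x_0}(u)\leq\frac12 H_{\max}<0$ --- here one must check that multiplying the \emph{negative} number $H_{x_0}(u)$ by $\frac{1+|<u,e_i>|^2}{2}\in[\frac12,1]$ reverses the inequality in the right direction --- while $\lambda_i\geq\lambda_{\min}$ by (\ref{maxmin}). Then every coefficient in (\ref{T-formula}) is $\leq\frac p2 H_{\max}-q\lambda_{\min}$, which is $<0$ under $pH_{\max}<2q\lambda_{\min}$, and the same contradiction applies; for $q=0$ the condition reads $pH_{\max}<0$, valid for $p\geq1$. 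Finally Theorem \ref{CKL result} is immediate: if $\text{HSC}(\omega)>0$ then $H_{\min}>0$, so for fixed $p$ the condition $2p\lambda_{\max}<qH_{\min}$ holds once $q$ is large (``$q>>p$''), and symmetrically in Part (2). I do not expect a genuine obstacle: the two lemmas carry the analytic content, and the only real care needed is the sign bookkeeping just flagged together with keeping the roles of $x_0$ and the extremal direction $u$ straight.
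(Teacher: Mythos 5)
Your proposal is correct and follows essentially the same route as the paper: evaluate at a maximum point of $|T|$, pick the direction $u$ extremizing $H_{x_0}$, sandwich the eigenvalues $\lambda_i(x_0,u)$ between $\frac12 H_{\min}$ (or $\frac12 H_{\max}$) and $\lambda_{\max}$ (or $\lambda_{\min}$) via Lemma \ref{inequality}, and play (\ref{T-formula}) against Lemma \ref{Bochner fromula0 lemma}. The only cosmetic difference is that you argue by contradiction from $|T(x_0)|^2>0$ while the paper concludes directly that $T(x_0)=0$; the sign check you flag in Part (2) is handled correctly.
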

\begin{proof}
Assume that $\text{HSC}(\omega)>0$. Let $T\in\Gamma^p_q(M)$, the maximal value of $|T|$ be attained at $x\in M$, and the unit vector $u\in T_xM$ minimize $H_x$.

First by the definitions (\ref{pointmaxmin}) and (\ref{maxmin}) the eigenvalues $\lambda_i(x,u)$ of the Hermitian transformation $R_{u\bar{u}}$ at $T_xM$ are bounded above by $\lambda_{\max}$: $\lambda_i(x,u)\leq\lambda_{\max}$.

Secondly, these eigenvalues $\lambda_i(x,u)$ are bounded below by $\frac{1}{2}H_{\min}$, which is positive under the assumption of $\text{HSC}(\omega)>0$. Indeed,
\be\begin{split}
\lambda_i(x,u)=&<R_{u\bar{u}}(e_i),e_i>
\qquad\big(\text{by (\ref{eigenvalue})}\big)\\
=&R(u,\bar{u},e_i,\bar{e_i})\\
\geq&\frac{1+|<u,e_i>|^2}{2}H_x(u)\qquad\big(\text{by (\ref{formula inequality})}\big)\\
\geq&\frac{1}{2}H_{\min}\qquad\big(\text{by (\ref{maxminholo})}\big)\\
>&0.
\end{split}\nonumber\ee

In summary, under the condition of $\text{HSC}(\omega)>0$, we have
\be\label{conclusion>0}\lambda_{\max}\geq\lambda_i(x,u)\geq\frac{1}{2}H_{\min}>0,\qquad\forall~1\leq i\leq n.\ee

Note that the key point here is that the two positive bounds $\lambda_{\max}$ and $\frac{1}{2}H_{\min}$ are \emph{independent} of the choice of the pair $(x,u)$ and hence depends only on the metric $\omega$. Therefore,
\be\label{6}
\begin{split}<R_{u\bar{u}}(T),T>\big|_x=&\sum_{\overset{\alpha_1,\ldots,\alpha_p}{\beta_1,\ldots,\beta_q}}
\big(\sum_{i=1}^p\lambda_{\alpha_i}-\sum_{j=1}^q\lambda_{\beta_j}\big)
\big|T^{\alpha_1\cdots\alpha_p}_{\beta_1\cdots\beta_q}\big|^2\qquad\big(\text{by (\ref{T-formula})}\big)\\
\leq&\sum_{\overset{\alpha_1,\ldots,\alpha_p}{\beta_1,\ldots,\beta_q}}
\big(p\cdot\lambda_{\max}-q\cdot\frac{1}{2}H_{\min}\big)
\big|T^{\alpha_1\cdots\alpha_p}_{\beta_1\cdots\beta_q}\big|^2\qquad\big(\text{by (\ref{conclusion>0})}\big)\\
\leq&0.\qquad \big(\text{when $2p\lambda_{\max}<qH_{\min}$}\big)
\end{split}\ee
However, (\ref{Bochner formula0}) in Lemma \ref{Bochner fromula0 lemma} implies that
$$<R_{u\bar{u}}(T),T>\big|_x\geq0,$$
which, together with (\ref{6}), in turn yields that $$<R_{u\bar{u}}(T),T>\big|_x=0.$$
Under the condition of the strict inequality $2p\lambda_{\max}<qH_{\min}$, the equality case of (\ref{6}) occurs only if all $|T^{\alpha_1\cdots\alpha_p}_{\beta_1\cdots\beta_q}\big|=0$ at $x$, i.e., $T(x)=0$. The maximum of $|T|$ at $x$ then lead to $T\equiv0$. This completes the first part in Theorem \ref{quantitative version}.

The proof of the second part is completely analogous. Under the condition of HSC being negative, we assume that the maximal value of $|T|$ be attained at $x$ and the unit vector $u\in T_xM$ \emph{maximizes} $H_x$.

Due to the same reasoning as in the first part we have
\be\label{conclusion<0}
\lambda_{\min}\leq\lambda_i(x,u)\leq\frac{1}{2}H_{\max}<0,\qquad(1\leq i\leq n)\nonumber\ee
and thus, when $pH_{\max}<2q\lambda_{\min}$,
\be\begin{split}
<R_{u\bar{u}}(T),T>\big|_x=&\sum_{\overset{\alpha_1,\ldots,\alpha_p}{\beta_1,\ldots,\beta_q}}
\big(\sum_{i=1}^p\lambda_{\alpha_i}-\sum_{j=1}^q\lambda_{\beta_j}\big)
\big|T^{\alpha_1\cdots\alpha_p}_{\beta_1\cdots\beta_q}\big|^2\\
\leq&\sum_{\overset{\alpha_1,\ldots,\alpha_p}{\beta_1,\ldots,\beta_q}}
\big(p\cdot\frac{1}{2}H_{\max}-q\cdot\lambda_{\min}\big)
\big|T^{\alpha_1\cdots\alpha_p}_{\beta_1\cdots\beta_q}\big|^2\\
\leq&0.\end{split}
\nonumber\ee
The same arguments as above again lead to $T\equiv0$, which completes the proof of the second part.
\end{proof}
\begin{remark}
We have seen from Theorem \ref{quantitative version} and its proof that the conditions ``$q>>p$" and ``$p>>q$" in Theorem \ref{CKL result} arise from $``2p\lambda_{\max}<qH_{\min}"$ and $``pH_{\max}<2q\lambda_{\min}"$ respectively. Although in the spirit of Theorem \ref{Bochner}, usually they are hard to verify, even if the metric $\omega$ is explicitly given.
\end{remark}

\section*{Acknowledgements}
The author would like to thank the referee for the careful reading and useful comments, and pointing out the reference \cite{KW}.


\begin{thebibliography}{99999999}
\bibitem[An12]{An}
{B. Andrews}:
\newblock {\em Noncollapsing in mean-convex mean curvature flow},
\newblock Geom. Topol. {\bf 16} (2012), 1413-1418.

\bibitem[AC11]{AC}
{B. Andrews, J. Clutterbuck}:
\newblock {\em Proof of the fundamental gap conjecture},
\newblock J. Amer. Math. Soc. {\bf 24} (2011), 899-916.

\bibitem[BG63]{BG}
{R. Bishop, S. Goldberg}:
\newblock {\em On the topology of positively curved Kaehler manifolds},
\newblock Tohoku Math. J. (2) {\bf 15} (1963), 359-364.

\bibitem[Br10]{Br}
{S. Brendle}:
\newblock {\em Ricci flow and the sphere theorem},
\newblock Graduate Studies in Mathematics, 111. American Mathematical Society, Providence, RI, 2010.

\bibitem[BYT13]{BYT}
{Y. Brunebarbe, B. Klingler, B. Totaro}:
\newblock {\em  Symmetric differentials and the fundamental group},
\newblock Duke Math. J. {\bf 162} (2013), 2797-2813.

\bibitem[DT19]{DT19}
{S. Diverio, S. Trapani}:
\newblock {\em Quasi-negative holomorphic sectional curvature and positivity of the canonical bundle},
\newblock J. Differential Geom.  {\bf 111} (2019), 303-314.

\bibitem[Go98]{Go98}
{S. Goldberg}:
\newblock {\em Curvature and homology},
\newblock Revised reprint of the 1970 edition. Dover Publications, Inc., Mineola, NY, 1998.

\bibitem[Gr70]{Gr}
{A. Gray}:
\newblock{\em Nearly K\"{a}hler manifolds},
\newblock J. Differential Geom. {\bf 4} (1970), 283-309.

\bibitem[Hi75]{Hi75}
{N. Hitchin}:
\newblock {\em On the curvature of rational surfaces},
\newblock Differential geometry (Proc. Sympos. Pure Math., Vol. XXVII, Part 2, Stanford Univ., Stanford, Calif., 1973), Amer. Math. Soc., Providence, R. I., 1975, pp. 65-80.

\bibitem[Ko80-1]{Ko}
{S. Kobayashi}:
\newblock {\em The first Chern class and holomoprhic symmetric tensor fields},
\newblock J. Math. Soc. Japan {\bf 32} (1980), 325-329.

\bibitem[Ko80-2]{Ko1}
{S. Kobayashi}:
\newblock {\em First Chern class and holomoprhic tensor fields},
\newblock Nagoya Math. J. {\bf 77} (1980), 5-11.

\bibitem[Ko87]{Ko2}
{S. Kobayashi}:
\newblock {\em  Differential Geometry of complex vector bundles},
\newblock Publications of the Mathematical Society of Japan, vol. 15, Kan\^{o} Memorial Lectures, 5, Princeton University Press, Princeton, N.J., 1987.

\bibitem[KH83]{KH}
{S. Kobayashi, C. Horst}:
\newblock {\em  Topics in Complex Differential Geometry},
\newblock Complex differential geometry, 4-66, DMV Sem., 3, Birkh\"{a}user, Basel, 1983.

\bibitem[KW70]{KW}
{S. Kobayashi, H.-H. Wu}:
\newblock {\em On holomorphic sections of certain hermitian vector bundles},
\newblock Math. Ann. {\bf 189} (1970), 1-4.


\bibitem[Liu16]{Liu}
{G. Liu}:
\newblock {\em Three-circle theorem and dimension estimate for holomorphic functions on K\"{a}hler manifolds},
\newblock Duke Math. J. {\bf 165} (2016), 2899-2919.

\bibitem[MK71]{MK71}
{J. Morrow, K. Kodaira}:
\newblock {\em  Complex manifolds},
\newblock Holt, Rinehart and Winston, Inc., New York-Montreal, Que.-London, 1971.

\bibitem[Ni13]{Ni}
{L. Ni}:
\newblock {\em  Estimates on the modulus of expansion for vector fields solving nonlinear equations},
\newblock J. Math. Pures Appl. (9) {\bf 99} (2013), 1-16.

\bibitem[NZ18]{NZ}
{L. Ni., F.-Y. Zheng}:
\newblock {\em  Positivity and Kodaria embedding theorem},
\newblock arXiv:1804.09696v2, to appear in Geom. Topol.

\bibitem[TY17]{TY17}
{V. Tosatti, X.-K. Yang}:
\newblock {\em  An extension of a theorem of Wu-Yau},
\newblock J. Differential Geom.  {\bf 107} (2017), 573-579.

\bibitem[WY16-1]{WY16-1}
{D.-M. Wu, S.-T. Yau}:
\newblock {\em Negative Holomorphic curvature and positive canonical bundle},
\newblock Invent. Math. {\bf 204} (2016), 595-604.

\bibitem[WY16-2]{WY16-2}
{D.-M. Wu, S.-T. Yau}:
\newblock {\em A remark on our paper ``Negative Holomorphic curvature and positive canonical bundle"},
\newblock Comm. Anal. Geom. {\bf 24} (2016), 901-912.

\bibitem[Wu88]{Wu88}
{H.-H. Wu}:
\newblock {\em The Bochner technique in differential geometry},
\newblock  Mathematical Reports, vol. 3, Harwood Academic Publishers, London-Paris, 1988, part 2.

\bibitem[YZ18]{YZ}
{B. Yang, F.-Y. Zheng}:
\newblock {\em On curvature tensors of Hermitian manifolds},
\newblock Comm. Anal. Geom. {\bf 26} (2018), 1195-1222.

\bibitem[Ya16]{Yang16}
{X.-K. Yang}:
\newblock {\em Hermitian manifolds with semi-positive holomorphic sectional curvature},
\newblock Math. Res. Lett. {\bf 23} (2016), 939-952.

\bibitem[Ya17]{Yang17}
{X.-K. Yang}:
\newblock {\em Big vector bundles and complex manifolds with semi-positive tangent bundles},
\newblock Math. Ann. {\bf 367} (2017), 251-282.

\bibitem[Ya18]{Yang18}
{X.-K. Yang}:
\newblock {\em RC-positivity, rational connectedness and Yau's conjecture},
\newblock Camb. J. Math. {\bf 6} (2018), 183-212.

\bibitem[YB53]{YB}
{K. Yano, S. Bochner}:
\newblock {\em Curvature and Betti Numbers},
\newblock Annals of Mathematics Studies, No. 32. Princeton University Press, Princeton (1953)

\bibitem[Yau77]{Yau77}
{S.-T. Yau}:
\newblock {\em Calabi's conjecture and some new results in algebraic geometry},
\newblock  Proc. Natl. Acad. Sci. USA, {\bf 74} (1977), 1798-1799.

\bibitem[Yau82]{Yau82}
{S.-T. Yau}:
\newblock {\em Problem Section},
\newblock Seminar on Differential Geometry, pp 669-706,
 Ann. of Math. Stud., 102, Princeton Univ. Press, Princeton, N.J., 1982.

\end{thebibliography}
\end{document}